\newif\ifprivate
\renewcommand{\gitMark}{\jobname\,\textbullet{}\,\gitFirstTagDescribe\,\textbullet{}\,\gitAuthorName,\,\gitAuthorIsoDate}
\newcommand{\TODO}[1]%
{\par\fbox{\begin{minipage}{0.9\linewidth}\textbf{TODO:} #1\end{minipage}}\par}
\DeclarePairedDelimiter{\abs}{\lvert}{\rvert}
\newtheorem{theorem}{Theorem}
\newtheorem{lemma}{Lemma}[section]
\newtheorem{proposition}[lemma]{Proposition}
\newcommand{\calR}{\mathcal{R}}
\newcommand{\calT}{\mathcal{T}}
\newcommand{\calV}{\mathcal{V}}
\newcommand{\calZ}{\mathcal{Z}}
\newcommand{\E}{\mathbb{E}}
\renewcommand{\MR}[1]{}
\renewcommand{\P}{\mathbb{P}}
\newcommand{\V}{\mathbb{V}}
\begin{document}

\title{Protection Number in Plane Trees}
\author[C.~Heuberger]{Clemens Heuberger}
\address[Clemens Heuberger]{Institut f\"ur Mathematik, Alpen-Adria-Uni\-ver\-si\-t\"at Klagenfurt,
  Universit\"atsstra\ss e 65--67, 9020 Klagenfurt, Austria}
\email{clemens.heuberger@aau.at}
\thanks{C.~Heuberger is supported by the Austrian Science Fund (FWF):
  P~24644-N26. This paper has been written while he was a visitor at Stellenbosch University.}

\author[H.~Prodinger]{Helmut Prodinger}
\thanks{H.~Prodinger is supported by an incentive grant of the National Research Foundation of South Africa.}

\address[Helmut Prodinger]{Department of Mathematical Sciences, Stellenbosch University, 7602 Stellenbosch,
 South Africa}
\email{hproding@sun.ac.za}

\subjclass{05C05; 05A15, 05A16}
\keywords{Protected node, protection number, singularity analysis, plane tree}

\begin{abstract}
  The protection number of a plane tree is the minimal distance of the root to
  a leaf; this definition carries over to an arbitrary node in a plane tree by
  considering the maximal subtree having this node as a root. We study the the
  protection number of a uniformly chosen random tree of size $n$ and also the
  protection number of a uniformly chosen node in a uniformly chosen random
  tree of size $n$. The method is to apply singularity analysis to appropriate
  generating functions. Additional results are provided as well.
\end{abstract}

\maketitle

\section{Introduction}

Cheon and Shapiro \cite{Cheon-Shapiro:2008:protec} started the study of 2-protected nodes in trees. A node
enjoys this property if its distance to any leaf is at least 2. After this
pioneering paper, a large number of papers has been published \cite{Mansour:2011:protec,
  Du-Prodinger:2012:notes, Bona:2014, Holmgren-Janson:2015:asymp,
  Devroye-Janson:2014:protec, Mahmoud-Ward:2015:asymp}.

In this paper we study the protection number of the root of a (rooted, plane)
tree (in the older literature often called ordered tree): It is the minimal
distance of the root to any leaf.  Further, the protection number of any node
is defined by taking the (maximal) subtree that has this node as the root.

Preliminary results on the subject have appeared in the recent paper
\cite{Copenhaver:2016}, but we show that, thanks to a rigorous use of methods
outlined in the book \emph{Analytic Combinatorics}~\cite{Flajolet-Sedgewick:ta:analy}, we can go much
further. We are able to solve a basic recurrence explicitly, which allows us to
use singularity analysis of generating functions and getting, at least in
principle, as many terms as one wants in the asymptotic expansions of
interest. Further, one can derive explicit expressions for the probabilities in
question.

Some curious observations related to the constants that appear are also made;
they are linked to identities due to Dedekind, Ramanujan and others and are
part of the toolkit of the \emph{modern analysis of algorithms}.

\section{Results}

In a rooted plane tree $t$, a vertex is said to be $k$-protected if its minimum
distance from a leaf is at least $k$. The tree $t$ is said to be $k$-protected
if its root is $k$-protected.

We denote the maximal $k$ such that a
vertex $v$ is $k$-protected by $\pi(v)$ and call it the protection number of
$v$. The protection number of the root is called the protection number of the
tree, $\pi(t)$. This means that a tree $t$ is $k$-protected if and only if
$\pi(t)\ge k$.

If a vertex $v$ is a leaf, then $\pi(v)=0$ by definition. Otherwise, if it has
children $w_1$, \ldots, $w_\ell$, then
\begin{equation*}
  \pi(v) = 1 + \min\{\pi(w_1), \ldots, \pi(w_\ell)\}.
\end{equation*}

We are interested in the random variables
\begin{itemize}
\item $X_n$, the protection number of a uniformly chosen random tree with $n$
  vertices,
\item $Y_{n}$, the protection number of a uniformly chosen vertex in a uniformly chosen
  random tree with $n$ vertices.
\end{itemize}

We will prove the following results.

\begin{theorem}\label{theorem:protection-number-tree}
  For $n\to\infty$, the protection number $X_n$ of a tree with $n$ vertices
  tends to a discrete limit distribution:
  \begin{multline*}
    \P(X_n=k) = \frac{27 \cdot 4^k  (4^{2k} - 1)}{(4^k + 2)^2 (2\cdot
      4^k+ 1)^2}\\
{\textstyle + \frac{81 \cdot 4^k  (4 (k-3) 4^{6k} + 36\cdot 4^{5k} - (45
    k-72) 4^{4k} - 80 k 4^{3k}  - (45 k+72) 4^{2k} - 36\cdot 4^k+ 4 (k+3))}{2
    (4^k + 2)^{4} (2\cdot 4^k + 1)^{4}}} \frac1{n} \\
 +
    O\Bigl(\frac{k^2}{3^k n^{3/2}}\Bigr).
  \end{multline*}

  Setting
 \begin{align*}
   c_0 &= \sum_{k\ge 1} \frac{9\cdot 4^k}{(4^{k} +2)^2} \\
       &= 1.622971384715353049514658203184345989635513668984063539407825\ldots, \\
   c_1 &= \sum_{k\ge 1} \frac{9\cdot 4^k  ((3 k-8) 4^{2k}+ 28\cdot 4^k - (12
         k+20))}{2(4^k + 2)^4} \\
       &=
         0.1311873689494231825244485810366733833577429413531428274982796\ldots,\\
   c_2 &= \sum_{k\ge 1}  \frac{9(2k-1)4^k}{(4^{k} +2)^2}  - c_0^2\\
       &= 0.71569507178333266731548919868273628601066118785422617431075\ldots, \\
   c_3 &=\sum_{k\ge 1}\frac{9(2k-1) 4^k  ((3 k-8) 4^{2k}+ 28\cdot 4^k - (12
         k+20))}{2(4^k + 2)^4} - 2c_0c_1\\
       &=-0.294639322732595323433878185755458143829498855158644070705218\ldots, 
 \end{align*}
 its expectation and variance can be written as
 \begin{align*}
   \E(X_n) &= c_0 + c_1 \frac{1}{n} + O\Bigl(\frac1{n^{3/2}}\Bigr),\\
   \V(X_n) &= c_2 + c_3 \frac{1}{n} + O\Bigl(\frac1{n^{3/2}}\Bigr).
 \end{align*}
\end{theorem}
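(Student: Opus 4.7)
The plan is to derive an explicit generating function for $k$-protected plane trees and then apply singularity analysis. Let $T(z) = (1-\sqrt{1-4z})/2$ enumerate plane trees by vertices, and let $T_k(z)$ enumerate those whose root has protection number at least $k$. Decomposing a $k$-protected tree into a non-leaf root and an ordered sequence of $(k-1)$-protected subtrees gives
\begin{equation*}
T_k(z) = \frac{z\, T_{k-1}(z)}{1-T_{k-1}(z)}, \qquad T_0(z)=T(z).
\end{equation*}
Writing $T_k = U_k/V_k$ linearises this to $U_k = zU_{k-1}$, $V_k = V_{k-1} - U_{k-1}$, which integrates to the closed form
\begin{equation*}
T_k(z) = \frac{z^k T(z)(1-z)}{(1-z) - T(z)(1-z^k)}.
\end{equation*}

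The next step is a singular expansion in $Z := \sqrt{1-4z}$ at the dominant singularity $z=1/4$. Substituting $T(z)=(1-Z)/2$, $1-z=(3+Z^2)/4$, and $z^k=4^{-k}(1-Z^2)^k$ turns both numerator and denominator of $T_k$ into power series in $Z$ whose coefficients are rational in $4^k$ and polynomial in $k$. Dividing these series gives
\begin{equation*}
T_k(z) = a_k + b_k Z + c_k Z^2 + d_k Z^3 + O(Z^4),
\end{equation*}
with $a_k = 3/(2(4^k+2))$, $b_k = -9\cdot 4^k/(2(4^k+2)^2)$, and $c_k$, $d_k$ read off from two further division steps. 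The transfer theorem from \emph{Analytic Combinatorics} then extracts the asymptotic behaviour of $[z^n]T_k(z)$, driven by the odd-power coefficients $b_k$ and $d_k$ (the even-power terms $a_k$ and $c_k Z^2 = c_k(1-4z)$ are polynomials in $z$ and contribute nothing asymptotically).

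Forming the ratio $\P(X_n\ge k) = [z^n]T_k(z)/[z^n]T(z)$ and expanding to order $1/n$ yields an explicit rational function in $4^k$; differencing $\P(X_n=k) = \P(X_n \ge k) - \P(X_n \ge k+1)$ and simplifying via $4\cdot 4^k+2 = 2(2\cdot 4^k+1)$ and $(2\cdot 4^k+1)^2 - (4^k+2)^2 = 3(4^{2k}-1)$ produces the stated closed form and its $1/n$ correction. Mean and variance then follow from the tail-sum formulae
\begin{equation*}
\E(X_n) = \sum_{k\ge 1}\P(X_n\ge k),\qquad \E(X_n^2) = \sum_{k\ge 1}(2k-1)\P(X_n\ge k),
\end{equation*}
and $\V(X_n)=\E(X_n^2)-\E(X_n)^2$; each constant $c_j$ is the $k$-sum of the matching expansion coefficient.

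The principal obstacle will be the \emph{uniform} error estimate $O(k^2/(3^k n^{3/2}))$. For any single $k$ the singular expansion and transfer are routine, but the moment sums require jointly controlled remainders in $k$ and $n$. This should follow from the explicit form of $T_k$: on a suitable $\Delta$-domain around $z=1/4$ of outer radius slightly less than $1/3$, the denominator $(1-z) - T(z)(1-z^k)$ stays bounded away from zero uniformly in $k$, while $|z^k|$ is uniformly $O(3^{-k})$. Consequently the $Z^{\ge 4}$ tail of the singular expansion of $T_k$ can be bounded by $O(k^2/3^k)$ uniformly on the domain (the polynomial factor $k^2$ arising from derivatives of $z^k$ that appear when extracting the higher Taylor coefficients in $Z$), and summability of $k^2/3^k$ transports the same order of error to $\E(X_n)$ and $\V(X_n)$.
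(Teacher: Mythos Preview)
Your proposal is correct and follows essentially the same route as the paper: derive a closed form for the generating function of $k$-protected trees from the recurrence $T_k = zT_{k-1}/(1-T_{k-1})$, expand it locally at $z=1/4$ in half-integer powers of $1-4z$ with uniform $O(k^2/3^k)$ control, apply singularity analysis, and then use the tail-sum identities for $\E(X_n)$ and $\E(X_n^2)$. The only cosmetic difference is the linearisation of the recurrence---the paper uses a continued-fraction ansatz $F_k=za_k/a_{k+1}$ yielding $R_{\ge k}(z)=\dfrac{(1-z)z^{k-2}T(z)^3}{1+z^{k-2}T(z)^3}$, while your $U_k/V_k$ substitution gives the equivalent form $\dfrac{z^kT(z)(1-z)}{(1-z)-T(z)(1-z^k)}$; both lead to the same singular expansion and the same asymptotics.
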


\begin{theorem}\label{theorem:vertex-protection-number}
  For $n\to \infty$, the protection number $Y_n$ of a random vertex of a random
  tree with $n$ vertices tends to a discrete limit distribution:
  \begin{multline*}
    \P(Y_n=k) = \frac{9\cdot 4^k}{(4\cdot 4^{k} +2)(4^k + 2)} \\
    + \frac{3\cdot 4^k (4^k - 1) ((6k-22)4^{3k} + (21k+30)4^{2k}  + (21k+96)4^k  + (6k + 58))}{(4^k + 2)^3  (2\cdot4^k + 1)^3} \frac1{n} \\
    + O\Bigl(\frac{k^2}{3^kn^{2}}\Bigr).
  \end{multline*}

  Setting
  \begin{align*}
    d_0 &=\sum_{k\ge 1}\frac{3}{4^k + 2}\\
    &=0.727649276913726097531184400482145348863515722775042276537008\ldots,\\
    d_1 &=\sum_{k\ge 1}\frac{(3 k-10) 4^{2 k} + (6 k+26) 4^k -
          16}{2(4^k+2)^3}\\
    &=-0.0311837125986222774945246489936100437425899128713521725307175\ldots,\\
    d_2 &= \sum_{k\ge 1}\frac{3(2k-1)}{4^k + 2} -d_0^2\\
    &=0.81689937948362892278879205623322983539562628691031631640757\ldots,\\
    d_3 &= \sum_{k\ge 1}\frac{(2k-1)((3 k-10) 4^{2 k} + (6 k+26) 4^k -
          16)}{2(4^k+2)^3} - 2d_0d_1\\
    &=0.014197899249123624176745586362758197533680269252844749278840\ldots,
  \end{align*}
  its expectation and variance can be written as
  \begin{align*}
    \E(Y_n) &= d_0 + d_1\frac1n  + O\Bigl(\frac1{n^{3/2}}\Bigr),\\
    \V(Y_n) &= d_2 + d_3\frac1n + O\Bigl(\frac1{n^{3/2}}\Bigr).
  \end{align*}
  
\end{theorem}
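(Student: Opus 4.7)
The plan is to adapt the generating-function strategy used for Theorem~\ref{theorem:protection-number-tree}, reusing in particular the explicit formula $T_k(z) = z^k T(z)(1-z)/((1-T(z))^2 + T(z) z^k)$ for the generating function of plane trees with root protection at least $k$, and adding a factor that accounts for the uniform choice of vertex. A pointed tree $(t,v)$ decomposes into the subtree rooted at $v$ (any plane tree with root protection $\ge k$) and its \emph{context} in $t$, namely the root-to-$v$ path together with all left- and right-sibling subtrees hanging off this path. Letting $\hat C(z)$ denote the generating function of contexts (with $z$ marking every vertex except $v$), the symbolic method gives $\hat C(z) = 1/(1 - z/(1-T(z))^2) = (1-T(z))/(1-2T(z))$ via the functional equation $z = T(1-T)$. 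Consequently, the generating function for pairs $(t,v)$ with $|t|=n$ and $\pi(v)\ge k$ is $S_k(z) := \hat C(z)\,T_k(z)$, and since there are $n\,[z^n]T(z)$ such pairs in total,
\[
\P(Y_n \ge k) = \frac{[z^n] S_k(z)}{n\,[z^n] T(z)}.
\]

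The next step is singularity analysis at $z=1/4$ uniformly in $k$. Substituting $\zeta := \sqrt{1-4z}$ (so $T = (1-\zeta)/2$, $1-2T = \zeta$, $z^k = 4^{-k}(1-\zeta^2)^k$), the function $\zeta S_k(z)$ rewrites as
\[
\zeta S_k(z) = \frac{4^{-k}(1-\zeta^2)^{k+1}(3+\zeta^2)}{4\bigl((1+\zeta)^2 + 2\cdot 4^{-k}(1-\zeta)^{k+1}(1+\zeta)^k\bigr)},
\]
which is analytic near $\zeta=0$. Its Taylor coefficients at $\zeta=0$ are rational in $\alpha := 4^{-k}$ with polynomial-in-$k$ numerators and can be computed explicitly: the constant term $3\alpha/(4(1+2\alpha))$ yields the leading behavior $\P(Y_n\ge k) \to 3/(4^k+2)$; the coefficient of $\zeta^2$ produces the $1/n$-correction via the standard transfer theorem for $(1-4z)^{1/2}$; and higher-order terms contribute $O(n^{-2})$. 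Taking $\P(Y_n = k) = \P(Y_n \ge k) - \P(Y_n \ge k+1)$ and simplifying then produces the stated two-term asymptotic expansion.

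The expectation and variance are handled through the identities $\E(Y_n) = \sum_{k\ge 1} \P(Y_n\ge k)$ and $\E(Y_n^2) = \sum_{k\ge 1}(2k-1)\P(Y_n\ge k)$: uniformity of the singular expansion in $k$, with a $k$-summable error, permits termwise summation. The leading sum and the $1/n$-coefficient for $\E(Y_n)$ give $d_0$ and $d_1$; those for $\E(Y_n^2)$ give $d_2+d_0^2$ and $d_3+2d_0 d_1$; and $\V(Y_n) = \E(Y_n^2)-\E(Y_n)^2$ then isolates $d_2$ and $d_3$ in the announced form. The main technical obstacle is justifying the uniformity in $k$: one must verify that $(1-T)^2 + T z^k$ stays bounded away from zero on a suitable $\Delta$-domain at $z=1/4$ for every $k\ge 0$, so that $S_k(z)$ extends analytically there, and one must quantitatively bound the remainder of its singular expansion by a $k$-dependent quantity (yielding the $O(k^2/(3^k n^2))$ error in the formula for $\P(Y_n=k)$) that is summable over $k$. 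Once this uniformity is secured, the rest of the argument reduces to routine expansion and summation.
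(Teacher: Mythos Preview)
Your argument is correct and follows the same overall architecture as the paper: derive a closed form for the generating function $S_{\ge k}(z)$ of pairs (tree, $k$-protected vertex), expand it singularly at $z=1/4$ uniformly in $k$, extract $\P(Y_n\ge k)$ by singularity analysis, and then pass to $\P(Y_n=k)$, $\E(Y_n)$, and $\V(Y_n)$ via the identities $\E(Y_n)=\sum_{k\ge 1}\P(Y_n\ge k)$ and $\E(Y_n^2)=\sum_{k\ge 1}(2k-1)\P(Y_n\ge k)$.

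The one genuine difference is how you obtain the factorisation $S_{\ge k}(z)=\hat C(z)\,R_{\ge k}(z)$ with $\hat C(z)=(1-T)/(1-2T)$. The paper reaches the same formula by interpreting the complement of the pointed subtree as a \emph{leaf-pointed} tree, which forces a detour through the bivariate Narayana generating function $T(v,z)$ and the operator $v\,\partial_v$ to compute $L(z)=\tfrac{z}{2}\bigl(1+\tfrac{1}{\sqrt{1-4z}}\bigr)$; one then checks that $z^{-1}L(z)=(1-T)/(1-2T)$. Your root-to-vertex path (``context'') decomposition is more direct: each level of the path contributes a factor $z/(1-T)^2=T/(1-T)$, and a geometric series gives $\hat C(z)$ without ever invoking Narayana numbers. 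Both routes land on the identical analytic input, so the subsequent singularity analysis and the derivation of $d_0,\dots,d_3$ coincide with the paper's.
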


\section{\texorpdfstring{Number of $k$-Protected Trees}{Number of k-Protected Trees}}
In this section, we investigate the auxiliary quantity $r_{nk}$, the number of
$k$-protected trees with $n$ vertices.

Let $\calR_{\ge k}$ denote the set of all rooted plane trees
with protection number $\ge k$. Its generating function is denoted
by $R_{\ge k}(z)$ where $z$ labels the number of vertices of a tree, i.e.,
\begin{equation*}
  R_{\ge k} (z)= \sum_{n\ge 1}r_{nk}z^n.
\end{equation*}

\begin{lemma}
  We have
  \begin{equation}\label{eq:R_0-formula}
    R_{\ge 0}(z)=\frac{1-\sqrt{1-4z}}2,
  \end{equation}
  and
  \begin{equation}\label{eq:R_k_formula}
    R_{\ge k}(z) = \frac{(1-z)z^{k-2}(R_{\ge 0}(z))^3}{1+z^{k-2}(R_{\ge 0}(z))^3}
  \end{equation}
  for $k\ge 1$.
\end{lemma}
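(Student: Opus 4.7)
The plan is to derive \eqref{eq:R_0-formula} and \eqref{eq:R_k_formula} via the symbolic method, obtain a simple recursion $R_{\ge k} = zR_{\ge k-1}/(1-R_{\ge k-1})$ directly from the definition of $\pi$, and then prove the closed form by induction on $k$. For \eqref{eq:R_0-formula} I would observe that every plane tree trivially satisfies $\pi(t)\ge 0$, so $\calR_{\ge 0}$ is the family of all rooted plane trees. Its standard decomposition (a root attached to an ordered, possibly empty, sequence of plane subtrees) translates to the functional equation $R_{\ge 0} = z/(1-R_{\ge 0})$, i.e.\ $R_{\ge 0}^2 - R_{\ge 0} + z = 0$, and selecting the branch with $R_{\ge 0}(0)=0$ yields \eqref{eq:R_0-formula}.

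For $k\ge 1$, the recursive definition of $\pi$ shows that $\pi(t)\ge k$ if and only if the root has at least one child and every child-subtree has protection number $\ge k-1$. Symbolically,
\[
  R_{\ge k}(z) = z\sum_{\ell\ge 1} R_{\ge k-1}(z)^{\ell} = \frac{zR_{\ge k-1}(z)}{1 - R_{\ge k-1}(z)}.
\]
With this recursion in hand I would prove \eqref{eq:R_k_formula} by induction on $k$. Throughout, the Catalan identity $R_{\ge 0}(1 - R_{\ge 0}) = z$, equivalently $R_{\ge 0}^2 = R_{\ge 0} - z$, plays a central role; multiplying by $R_{\ge 0}$ gives the useful consequence $R_{\ge 0}^3 = (1-z)R_{\ge 0} - z$, so that $z + R_{\ge 0}^3 = (1-z)R_{\ge 0}$.

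For the base case $k=1$, the recursion together with $1 - R_{\ge 0} = z/R_{\ge 0}$ gives $R_{\ge 1} = R_{\ge 0}^2$, while the right-hand side of \eqref{eq:R_k_formula} at $k=1$ simplifies via $z + R_{\ge 0}^3 = (1-z)R_{\ge 0}$ to $(1-z)R_{\ge 0}^3/((1-z)R_{\ge 0}) = R_{\ge 0}^2$, matching. For the induction step, setting $A := z^{k-3}R_{\ge 0}^3$, the hypothesis reads $R_{\ge k-1} = (1-z)A/(1+A)$, whence $1 - R_{\ge k-1} = (1+zA)/(1+A)$ and
\[
  R_{\ge k} = \frac{zR_{\ge k-1}}{1 - R_{\ge k-1}} = \frac{z(1-z)A}{1 + zA} = \frac{(1-z)z^{k-2}R_{\ge 0}^3}{1 + z^{k-2}R_{\ge 0}^3},
\]
as required. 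No serious obstacle is anticipated: the induction step is essentially a one-line rational-function calculation, and the only delicate point is massaging the base case into the form of \eqref{eq:R_k_formula} using the Catalan functional equation.
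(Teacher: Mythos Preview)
Your argument is correct: the recursion $R_{\ge k}=zR_{\ge k-1}/(1-R_{\ge k-1})$ is exactly what the paper obtains, and your induction (with the Catalan identities $R_{\ge 0}(1-R_{\ge 0})=z$ and $z+R_{\ge 0}^3=(1-z)R_{\ge 0}$) goes through cleanly for both the base case $k=1$ and the step.

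The only difference from the paper is in how the closed form~\eqref{eq:R_k_formula} is reached from the recursion. The paper explicitly remarks that an induction would be easy but instead \emph{derives} the formula: writing $F_k=R_{\ge k}+z$ turns the recursion into $F_k=z/(1+z-F_{k-1})$, and the continued-fraction Ansatz $F_k=za_k/a_{k+1}$ reduces everything to the linear recurrence $a_{k+1}=(1+z)a_k-za_{k-1}$ with characteristic roots $1$ and $z$. Your induction is shorter and perfectly sufficient to establish the lemma; the paper's route has the advantage of explaining where the closed form comes from rather than verifying it after the fact.
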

\begin{proof}
  It is clear that $R_{\ge 0}$ is the generating function of all rooted plane
  trees which is well-known to be given by \eqref{eq:R_0-formula}, cf.\ for
  instance \cite[\S~I.5.1]{Flajolet-Sedgewick:ta:analy}.

  For $k\ge 1$, the root of a tree is $k$-protected if and only if all of its children are
  $(k-1)$-protected. Thus a tree is $k$-protected if and only if it
  consists of a root and a non-empty sequence of branches whose roots are
  $(k-1)$-protected. This translates into the symbolic equation shown in
  Figure~\ref{fig:k-protected-tree-symbolic} and thus into the equation
  \begin{equation}\label{eq:recurrence-R_k}
    R_{\ge k} (z) = \frac{zR_{\ge k-1}(z)}{1-R_{\ge k-1}(z)}
  \end{equation}
  for $k\ge 1$.
  \begin{figure}[ht]
    \centering
    \begin{tikzpicture}
      \node (add) {$\calR_{\ge k}=$};
      \node[right of=add, draw, inner sep=3pt, xshift=7em, yshift=2em] (right-V) {};
      \node[below of=right-V, yshift=-1.5em, xshift=-6em] (1) {$\calR_{\ge k-1}$};
      \node[below of=right-V, yshift=-1.5em, xshift=-3em] (2) {$\calR_{\ge k-1}$};
      \node[below of=right-V, yshift=-1.5em, xshift=0em]  (3) {$\calR_{\ge k-1}$};
      \node[below of=right-V, yshift=-1.5em, xshift=3em, gray] (4) {$\cdots$};
      \node[below of=right-V, yshift=-1.5em, xshift=6em] (5) {$\calR_{\ge k-1}$};
      \draw (right-V) -- (1) (right-V) -- (2) (right-V) -- (3) (right-V) -- (5);
      \draw[dotted, gray] (right-V) -- (4);
    \end{tikzpicture}
    \caption{Symbolic equation for $\calR_{\ge k}$.}
    \label{fig:k-protected-tree-symbolic}
  \end{figure}
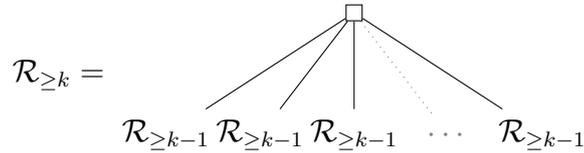

  It would now be easy to prove~\eqref{eq:R_k_formula} by induction; however,
  we intend to \emph{derive}~\eqref{eq:R_k_formula}.

  We rewrite the recurrence~\eqref{eq:recurrence-R_k} in the form
  \begin{equation*}
    R_{\ge k}(z) = -z +\frac{z}{1-R_{\ge k-1}(z)}
  \end{equation*}
  such that $R_{\ge k-1}$
  occurs only once.

  Using the abbreviation $F_k \coloneqq R_{\ge k}(z)+z$ this leads to
  \begin{equation*}
    F_k = \frac{z}{1+z-F_{k-1}}.
  \end{equation*}
  This is reminiscent of continued fractions. We use the Ansatz
  $F_k=z a_k/a_{k+1}$ resulting in the equation
  \begin{equation*}
    \frac{z a_k}{a_{k+1}} = \frac{z a_k}{(1+z)a_k - za_{k-1}}.
  \end{equation*}
  It is sufficient to require
  \begin{equation*}
    a_{k+1} = (1+z)a_k -z a_{k-1}
  \end{equation*}
  for $k\ge 1$.

  This is a linear recurrence whose characteristic equation has roots $1$ and
  $z$, so its solution has the form $a_k= A + Bz^k$. As common factors between
  $a_k$ and $a_{k+1}$ do not matter, we may choose $A=z^3$ which leads to
  $B=(R_{\ge 0}(z))^3$.

  Thus
  \begin{equation*}
    R_{\ge k}(z) = \frac{z(z^3+z^{k}(R_{\ge 0}(z))^3)}{z^3+z^{k+1}(R_{\ge 0}(z))^3} -z 
  \end{equation*}
  which results in~\eqref{eq:R_k_formula}.
\end{proof}

\begin{proposition}\label{proposition:root-k-protected}The probability that a
  tree is $k$ protected is
  \begin{equation}\label{eq:root-k-protected}
  \begin{aligned}
    \P(X_n\ge k)&=\frac{9\cdot 4^k}{(4^{k} +2)^2} \\
                &\qquad+ \frac{9\cdot  4^k  ((3
                  k-8) 4^{2k}+ 28\cdot 4^k
                  - (12 k+20))}{2(4^k +
                  2)^4} \frac1n\\
    &\qquad + O\Bigl(\frac{k^2}{3^k n^{3/2}}\Bigr).  
  \end{aligned}
  \end{equation}
\end{proposition}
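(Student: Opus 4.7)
The plan is to compute $\P(X_n\ge k)=[z^n]R_{\ge k}(z)/[z^n]R_{\ge 0}(z)$ by singularity analysis near the common singularity at $z=1/4$. First I apply the substitution $u\coloneqq\sqrt{1-4z}$, giving $z=(1-u^2)/4$ and $R_{\ge 0}(z)=(1-u)/2$. A short calculation rewrites the quantity $z^{k-2}(R_{\ge 0}(z))^3$ appearing in~\eqref{eq:R_k_formula} as
\[
  T_k(u)\coloneqq\frac{2(1-u)^{k+1}(1+u)^{k-2}}{4^k},
\]
while $1-z=(3+u^2)/4$, so that
\[
  R_{\ge k}(z)=\frac{(3+u^2)\,T_k(u)}{4\bigl(1+T_k(u)\bigr)}
\]
is a rational function of $u$ with $k$ as a parameter.

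Next I expand this in a Taylor series at $u=0$,
$R_{\ge k}(z)=a_0(k)+a_1(k)u+a_2(k)u^2+a_3(k)u^3+O(u^4)$.
Since the even powers $u^{2j}=(1-4z)^j$ are polynomials in $z$, they contribute nothing to $[z^n]$ for large $n$; only the odd-power coefficients matter. Logarithmic differentiation of $T_k$ yields $T_k(0)=2/4^k$, $T_k'(0)=-6/4^k$, $T_k''(0)=4(5-k)/4^k$, $T_k'''(0)=12(3k-7)/4^k$. Differentiating the identity $(1+T_k)R_{\ge k}=(3+u^2)T_k/4$ three times at $u=0$ and plugging in these values gives
$a_1(k)=-9\cdot 4^k/(2(4^k+2)^2)$
immediately, and an analogous but longer routine computation produces an explicit rational expression for $a_3(k)$.

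The transfer theorem \cite[Chapter~VI]{Flajolet-Sedgewick:ta:analy} then gives, for odd $j$,
\[
  [z^n]u^j=[z^n](1-4z)^{j/2}\sim\frac{4^n}{\Gamma(-j/2)\,n^{j/2+1}}\Bigl(1+\frac{(j/2)(j/2+1)}{2n}+\cdots\Bigr),
\]
while $[z^n]R_{\ge 0}(z)=\binom{2n-2}{n-1}/n$ is exact. Applied to $a_1(k)u+a_3(k)u^3$ and divided by $[z^n]R_{\ge 0}(z)$, the term $-2a_1(k)$ produces the leading contribution $9\cdot 4^k/(4^k+2)^2$, and the coefficient of $1/n$ works out to $3\,a_3(k)$, matching the second term in the statement after simplification.

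Finally, for the error bound uniform in $k$, I use Cauchy estimates on a fixed disk $\{|u|\le r\}$ with $r$ chosen so that $(1+r)^2/4\le 1/3$, e.g.\ $r=2/\sqrt3-1$. On this disk $|T_k(u)|$ is $O(3^{-k})$, hence $R_{\ge k}(u)$ is $O(3^{-k})$, and Cauchy's inequality bounds the tail Taylor coefficients by $|a_j(k)|=O(r^{-j}3^{-k})$. Pushing the $O(u^4)$ Taylor remainder through the transfer theorem delivers the claimed error $O(k^2/(3^k n^{3/2}))$, the factor $k^2$ absorbing polynomial-in-$k$ contributions from $a_3(k)$ and the higher tail. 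The main obstacle is the closed-form evaluation of $a_3(k)$ and its algebraic simplification into the stated numerator--denominator form; this is tedious but entirely mechanical, best handled with a computer algebra system. The only real subtlety is the uniform-in-$k$ tail bound, where choosing the Cauchy radius $r$ independent of $k$ forces the exponential factor $3^{-k}$ rather than the better $4^{-k}$ exhibited by the main terms themselves.
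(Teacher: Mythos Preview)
Your approach is correct and is essentially the paper's own proof: both expand $R_{\ge k}(z)$ locally at the dominant singularity $z=1/4$ in half-integer powers of $1-4z$ and then apply singularity analysis, dividing by the Catalan number at the end. The only cosmetic difference is that you parametrise via $u=\sqrt{1-4z}$ and Taylor-expand the closed form $T_k(u)$, whereas the paper expands $z^{k-2}$ directly as $16\cdot 4^{-k}-16(k-2)4^{-k}(1-4z)+O\bigl(k^2 3^{-k}(1-4z)^2\bigr)$ on the region $|z-1/4|\le 1/12$ and substitutes; the resulting singular expansion and the source of the $3^{-k}$ in the uniform error are the same in both arguments (indeed your Cauchy estimate even makes the factor $k^2$ superfluous).
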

\begin{proof}

  We intend to use singularity analysis (\cite{Flajolet-Odlyzko:1990:singul}, \cite[Chapter~VI]{Flajolet-Sedgewick:ta:analy}).
  Let  $z$ be in some $\Delta$-domain at $1/4$ (see \cite[Definition
  VI.1]{Flajolet-Sedgewick:ta:analy}) with $\abs{z-1/4}\le 1/12$.
  We have
  \begin{align*}
    z^{k-2} &= \Bigl(\frac14 - \Bigl(\frac14 - z\Bigr)\Bigr)^{k-2} \\
            &= 
              \frac{16}{4^k} - \frac{16(k-2)}{4^k} (1-4z)+ O\Bigl(\frac{k^2}{3^k}(1-4z)^2\Bigr).
  \end{align*}
  Inserting this into \eqref{eq:R_k_formula}, we get
  \begin{equation}\label{eq:R_k_singular_expansion}
  \begin{aligned}
    R_{\ge k}(z) &= \frac{3}{2\cdot4^k + 4} + \frac{-9\cdot4^k}{2\cdot4^{2k} +
                   8\cdot4^k + 8} (1-4z)^{1/2} \\
&\qquad+ \frac{-3 k\cdot4^{2k} - 6 k 4^k + 16\cdot4^{2k} - 20\cdot4^k +
  4}{2\cdot4^{3k} + 12\cdot4^{2k} + 24\cdot4^k + 16}(1-4z)\\
&\qquad + \frac{9 k 4^{3k} - 24\cdot4^{3k} - 36 k 4^k + 84\cdot4^{2k} -
  60\cdot4^k}{2\cdot4^{4k} + 16\cdot4^{3k} + 48\cdot4^{2k} + 64\cdot4^k +
  32}(1-4z)^{3/2} \\
&\qquad + O\Bigl(\frac{k^2}{3^k}(1-4z)^2\Bigr).
  \end{aligned}
  \end{equation}
  By singularity analysis, we obtain
  \begin{multline*}
    r_{nk} = 
             \Bigl(\frac{9\cdot 4^k}{4 \sqrt{\pi} (4^{2 k} + 4\cdot 4^k + 4)}\Bigr) 4^{n}
             n^{-\frac{3}{2}} \\
           + \Bigl(\frac{9  (12\cdot 4^{3 k} k - 29\cdot 4^{3 k} + 124\cdot 4^{2 k} -
             48\cdot 4^k k - 68\cdot 4^k)}{32 \sqrt{\pi}  (4^{4 k} + 8\cdot
             4^{3 k} + 24\cdot 4^{2 k} + 32\cdot 4^k + 16)}\Bigr) 4^{n}
             n^{-\frac{5}{2}}\\
            + O\Bigl(\frac{k^24^{n}}{3^k n^3}\Bigr)
  \end{multline*}

  Singularity analysis and division by the number $C_{n-1}$ (the $(n-1)$st
  Catalan-number) of rooted plane trees (this corresponds to setting $k=0$
  above) yields~\eqref{eq:root-k-protected}.

\end{proof}

\begin{proof}[Proof of Theorem~\ref{theorem:protection-number-tree}]
  We use $\P(X_n=k)=\P(X_n\ge k) - \P(X_n\ge k+1)$ and
  Proposition~\ref{proposition:root-k-protected} to prove the limit theorem.

  The expectation follows from the well-known formula
  \[ \E(X_n)=\sum_{k\ge 1}\P(X_n\ge k) \]
  which is valid for all random variables with non-negative integer values.

  Similarly, the variance follows from $\V(X_n)=\E(X_n^2)-\E(X_n)^2$ and
  \begin{align*}
    \E(X_n^2)&=\sum_{k\ge 1} k^2 \P(X_n=k)
    = \sum_{k\ge 1}(2k-1)\P(X_n\ge k).
  \end{align*}
\end{proof}

\section{Protection Numbers of all Vertices}

We now turn to the protection numbers of arbitrary vertices. We fix some $k$
and consider the number $s_{nk}$ of $k$-protected vertices summed over all
trees of size $n$. The corresponding generating function is denoted by
\begin{equation*}
  S_{\ge k}(z) = \sum_{n\ge 0} s_{nk} z^n
\end{equation*}
where $z$ again labels the number of vertices.

\begin{lemma}We have
  \begin{equation}\label{eq:S_k-formula}
    S_{\ge k}(z) = \frac12 R_{\ge k}(z) \Bigl( 1 + \frac{1}{\sqrt{1-4 z}}\Bigr).
  \end{equation}
\end{lemma}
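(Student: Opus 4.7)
The approach is a size-preserving bijection followed by solving for the generating function of the ``context'' of a vertex. Given a plane tree $t$ with a distinguished vertex $v$, let $T_v$ denote the subtree of $t$ rooted at $v$, and let $t'$ denote the tree obtained from $t$ by deleting everything strictly below $v$, so that $v$ becomes a leaf of $t'$. The map $(t,v)\mapsto(T_v,t')$ is a bijection between (plane tree, marked vertex) and (plane tree, plane tree with a marked leaf), with size relation $\abs{t}=\abs{T_v}+\abs{t'}-1$ because $v$ is counted once in each component. Since $v$ is $k$-protected in $t$ if and only if $T_v$ is $k$-protected, this bijection restricts to $k$-protected vertices and yields
\begin{equation*}
z\cdot S_{\ge k}(z) \;=\; R_{\ge k}(z)\cdot L(z),
\end{equation*}
where $L(z)$ denotes the generating function for plane trees with a distinguished leaf.

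It remains to identify $L(z)$. Specializing the identity above to $k=0$ gives $zS_{\ge 0}(z)=R_{\ge 0}(z)L(z)$, while $[z^n]S_{\ge 0}(z)$ counts each plane tree of size $n$ once per vertex and hence equals $nC_{n-1}$; equivalently, $S_{\ge 0}(z)=zR_{\ge 0}'(z)=z/\sqrt{1-4z}$. Combined with $R_{\ge 0}(z)=(1-\sqrt{1-4z})/2$, this gives
\begin{equation*}
\frac{L(z)}{z} \;=\; \frac{z}{R_{\ge 0}(z)\sqrt{1-4z}} \;=\; \frac{1+\sqrt{1-4z}}{2\sqrt{1-4z}} \;=\; \frac{1}{2}\Bigl(1+\frac{1}{\sqrt{1-4z}}\Bigr),
\end{equation*}
after rationalizing $1/R_{\ge 0}(z)=(1+\sqrt{1-4z})/(2z)$. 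Substituting into $S_{\ge k}(z)=R_{\ge k}(z)L(z)/z$ yields~\eqref{eq:S_k-formula}.

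The only point that requires care is the size accounting in the bijection: the vertex $v$ lies in $T_v$ as its root and in $t'$ as its marked leaf, so it is counted twice, which is precisely what produces the single extra factor of $z$ on the left-hand side. Everything else is routine algebra using the defining relation for $R_{\ge 0}(z)$; if one preferred to avoid the specialization trick for computing $L(z)$, a direct bivariate generating function argument (marking leaves with a second variable $u$ and differentiating at $u=1$) gives the same expression $L(z)=z(1-R_{\ge 0})^2/((1-R_{\ge 0})^2-z)$, which simplifies identically.
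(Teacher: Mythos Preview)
Your proof is correct and follows the same decomposition as the paper: both establish the identity $z\,S_{\ge k}(z)=R_{\ge k}(z)\,L(z)$ via the bijection that splits a marked tree at the distinguished vertex into the subtree below and the ``context'' above (with the vertex counted twice, hence the extra $z$). The only difference is in computing $L(z)$: the paper introduces the bivariate generating function $T(v,z)$ marking leaves, differentiates in $v$, and evaluates at $v=1$, whereas you specialize the already-proved relation to $k=0$ and use $S_{\ge 0}(z)=zR_{\ge 0}'(z)=z/\sqrt{1-4z}$ to solve for $L(z)$ directly. Your shortcut is arguably more economical, since it reuses the bijection itself rather than bringing in Narayana numbers; the paper's route has the mild advantage of identifying $L(z)$ intrinsically, independent of the relation being proved.
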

\begin{proof}

In the language of the symbolic method, the generating function $S_{\ge k}(z)$ corresponds to
the class $\Theta_{\calZ_{\ge k}} \calT$ where $\Theta_{\ge \calZ_k}$ denotes
\emph{pointing} to a $k$-protected vertex,
cf.~\cite[Definition~I.14]{Flajolet-Sedgewick:ta:analy}.

A tree $t$ and a $k$-protected vertex $w$ in this tree bijectively correspond to
a $k$-protected tree $t_1$ whose root is merged with a leaf of another tree $t_2$, cf.\
Figure~\ref{figure:decompose-at-k-protected-vertex}.
  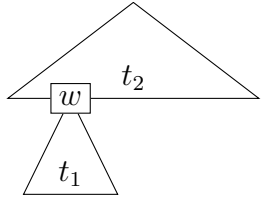
\begin{figure}[ht]
    \centering
    \begin{tikzpicture}
      \coordinate (root);
      \coordinate[below of=root, yshift=-1.5ex, xshift=-4em] (1);
      \coordinate[below of=root, yshift=-1.5ex, xshift=4em] (5);
      \node[below of=root, draw, inner sep=3pt, yshift=-1.5ex, xshift=-2em] (w)
      {$w$};
      \coordinate[below of=w, yshift=-1.5ex, xshift=-1.5em] (3);
      \coordinate[below of=w, yshift=-1.5ex, xshift=1.5em] (4);
      \draw (root) -- (1) -- (w) -- (3) -- (4) -- (w)-- (5) --(root);
      \node[below of=root] {$t_2$};
      \node[below of=w] {$t_1$};
    \end{tikzpicture}
    \caption{Decomposition at a $k$-protected vertex.}
    \label{figure:decompose-at-k-protected-vertex}
  \end{figure}
Thus $\calZ \times \Theta_{\ge k}\calT$ corresponds bijectively to $\calR_{\ge k} \times \Theta_\calV \calT$ where $\Theta_\calV$ denotes
pointing at a leaf and the factor $\calZ$ on the left hand side denotes one
single vertex which compensates the fact that merging the root of one tree with a leaf
of the other tree reduces the number of vertices by $1$.

Thus $S_{\ge k}(z) = z^{-1} R_{\ge k}(z) L(z)$ where $L(z)$ denotes the
generating function of $\Theta_{\calV} \calT$ with respect to the number of
vertices. Note that the pointing is with respect to leaves, but $L$ is a
generating function with respect to all vertices.

Let
\begin{equation*}
  T(v, z) = \sum_{n\ge 0}\sum_{\ell\ge 0} N_{n-1, \ell} v^\ell z^n
\end{equation*}
denote the generating function of $\calT$ where $z$ marks the number of
vertices and $v$ marks the number of leaves. Here, $N_{n-1, \ell}$ denotes the
Narayana number counting the number of trees with $n$ vertices and $\ell$ leaves.

It is a well-known consequence of the symbolic method that
\begin{equation*}
  T(v, z) = zv + \frac{zT(v, z)}{1-T(v, z)},
\end{equation*}
cf.\ \cite[Example~III.13]{Flajolet-Sedgewick:ta:analy}. This yields the
explicit expression 
\begin{equation*}
  T(v, z) = \frac{1-  z + v z  -\sqrt{(v-1)^2 z^{2} - 2  (v + 1) z + 1}}{2}. 
\end{equation*}
Pointing corresponds to applying
$v\frac{d}{dv}$, cf.\ \cite[Theorem~I.4]{Flajolet-Sedgewick:ta:analy}. Setting
$v=1$ then leads to
\begin{equation*}
  L(z) = \Bigl(v\frac{dT(v, z)}{dv}\Bigr)\Bigr|_{v=1} = \frac{z}{2} \Bigl( 1 + \frac{1}{\sqrt{1-4 z}}\Bigr).
\end{equation*}
This yields \eqref{eq:S_k-formula}.
\end{proof}

\begin{proposition}\label{proposition:vertex-k-protected}
  The probability that a random vertex of a random tree with
  $n$ vertices has protection number at least $k$ is
  \begin{equation*}\label{eq:expectation-k-protected-vertices}
    \P(Y_{n}\ge k) = \frac{3}{4^k + 2}
    + \frac{(3 k-10)4^{2 k} + (6 k +26) 4^k - 16}{2(4^k+2)^3} \frac{1}{n} + O\Bigl(\frac{k^2}{3^kn^2}\Bigr).
  \end{equation*}
\end{proposition}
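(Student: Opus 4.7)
The plan is to parallel the proof of Proposition~\ref{proposition:root-k-protected}, starting from the product formula~\eqref{eq:S_k-formula} and the singular expansion~\eqref{eq:R_k_singular_expansion} already established for $R_{\ge k}(z)$. Since multiplication by $\tfrac12\bigl(1+(1-4z)^{-1/2}\bigr)$ preserves the structure of an expansion in half-integer powers of $(1-4z)$, all the hard work for $R_{\ge k}$ can be reused.

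First I would substitute \eqref{eq:R_k_singular_expansion} into \eqref{eq:S_k-formula} and multiply out. Writing $R_{\ge k}(z)=a_0+a_1(1-4z)^{1/2}+a_2(1-4z)+a_3(1-4z)^{3/2}+O(k^2 3^{-k}(1-4z)^2)$ with the $a_i$ read off from~\eqref{eq:R_k_singular_expansion} (so in particular $a_0=3/(2\cdot 4^k+4)$), the factor $(1-4z)^{-1/2}/2$ shifts each power down by $1/2$, giving
\begin{equation*}
  S_{\ge k}(z) = \tfrac12 a_0(1-4z)^{-1/2} + \bigl(\tfrac12 a_0+\tfrac12 a_1\bigr) + \tfrac12(a_1+a_2)(1-4z)^{1/2} + \bigl(\tfrac12 a_2+\tfrac12 a_3\bigr)(1-4z)+\tfrac12 a_3(1-4z)^{3/2}+O\Bigl(\tfrac{k^2}{3^k}(1-4z)^{3/2}\Bigr).
\end{equation*}
The constant and linear terms are analytic and contribute nothing to the asymptotics; the truly singular contributions come from $(1-4z)^{-1/2}$, $(1-4z)^{1/2}$, $(1-4z)^{3/2}$.

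Second, I would apply the transfer theorems of singularity analysis in the same $\Delta$-domain as before to extract
\begin{equation*}
  s_{nk} = \tfrac12 a_0 \cdot \frac{4^{n}}{\sqrt{\pi n}}\Bigl(1-\tfrac{1}{8n}+O(n^{-2})\Bigr) - \tfrac12(a_1+a_2)\cdot \frac{4^{n}}{2\sqrt{\pi}\,n^{3/2}}(1+O(n^{-1})) + O\Bigl(\frac{k^2\,4^{n}}{3^{k}\,n^{5/2}}\Bigr),
\end{equation*}
and then divide by $nC_{n-1}=\binom{2n-2}{n-1}$, the number of (tree, vertex)-pairs on $n$ vertices; the latter has the well-known expansion $\binom{2n-2}{n-1}=\frac{4^{n-1}}{\sqrt{\pi(n-1)}}(1+O(n^{-1}))$ (equivalently, obtained by applying singularity analysis to $z/\sqrt{1-4z}=\frac14(1-4z)^{-1/2}-\frac14(1-4z)^{1/2}$). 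The $(1-4z)^{-1/2}$ term of $S_{\ge k}$ matched against the leading term of $\binom{2n-2}{n-1}$ produces the main term $2a_0 = 3/(4^{k}+2)$, and collecting the next-order contributions gives the $1/n$ correction.

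Third, I would substitute the explicit values
\begin{equation*}
  a_1=\frac{-9\cdot 4^{k}}{2(4^{k}+2)^2},\qquad a_2=\frac{-3k\cdot 4^{2k}-6k\cdot 4^{k}+16\cdot 4^{2k}-20\cdot 4^{k}+4}{2(4^{k}+2)^3}
\end{equation*}
into the combined expression $2a_0\cdot(\text{correction from }C_{n-1})-(a_1+a_2)\cdot(\text{transfer factor})$ and simplify to match the stated numerator $(3k-10)4^{2k}+(6k+26)4^{k}-16$ over $2(4^{k}+2)^3$. Finally, the error bound $O(k^2 4^n/(3^k n^{5/2}))$ from $s_{nk}$, once divided by $nC_{n-1}\sim 4^n n^{-1/2}/\sqrt{\pi}$, yields the claimed $O(k^2/(3^k n^2))$ remainder.

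The main obstacle is purely algebraic: combining the $k$-dependent rational coefficients $a_0,a_1,a_2$ with the subdominant expansion of $\binom{2n-2}{n-1}$ and simplifying the resulting rational function of $4^{k}$ to the compact form in the statement. Because all ingredients are explicit, no new analytic difficulty arises beyond the uniform-in-$k$ transfer already used in Proposition~\ref{proposition:root-k-protected}; careful bookkeeping of the geometric factor $3^{-k}$ in the remainder term through the division by $nC_{n-1}$ is the only subtlety.
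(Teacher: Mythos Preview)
Your proposal is correct and follows the same approach as the paper: substitute the singular expansion of $R_{\ge k}(z)$ into the product formula~\eqref{eq:S_k-formula}, read off the expansion of $S_{\ge k}(z)$ in half-integer powers of $1-4z$, apply singularity analysis to obtain $s_{nk}$, and divide by $nC_{n-1}$. The only cosmetic difference is that the paper writes out the resulting singular expansion of $S_{\ge k}(z)$ and the asymptotics of $s_{nk}$ explicitly (with the $k$-dependent coefficients fully simplified), whereas you organize the computation via the generic coefficients $a_0,\dots,a_3$ before substituting; the underlying argument is identical.
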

\begin{proof}
  By \eqref{eq:S_k-formula} and \eqref{eq:R_k_singular_expansion}, we get
  \begin{align*}
    S_{\ge k}(z) &=  \frac{3}{4\cdot 4^k + 8} (1-4z)^{1/2} - \frac{3\cdot 4^k -
      3}{2\cdot 4^{2 k} + 8\cdot 4^k + 8}\\ &\qquad-\frac{(3 k-7) 4^{2 k} + (6 k +38) 4^k  - 4}{4\cdot 4^{3 k} + 24\cdot 4^{2 k} + 48\cdot 4^k
      + 32} (1-4z)^{-1/2} \\&\qquad+
    \frac{(3 k-4) 4^{3 k} - 6 (k -8) 4^{2 k} -
      24 (k+2) 4^k  + 4}{2\cdot 4^{4 k} + 16\cdot
      4^{3 k} + 48\cdot 4^{2 k} + 64\cdot 4^k + 32} (1-4z)^{-1}\\
    &\qquad+ O\Bigl(\frac{k^2}{3^k(1-4z)^{-3/2}}\Bigr).
  \end{align*}
  By singularity analysis, we get
  \begin{multline*}
    s_{nk} = \frac{3}{4{\sqrt{\pi}(4^k + 2)}} 4^{n}
    n^{-1/2} \\+ \frac{(12k-31) 4^{2 k} + (24k+140) 4^k - 28}{32
      \sqrt{\pi} {(4^{3 k} + 6 \cdot 4^{2 k} + 12 \cdot 4^k + 8)}} 4^{n}
    n^{-3/2} + O\Bigl(\frac{k^2}{3^kn^{5/2}}\Bigr).
  \end{multline*}

  Dividing by the number $C_{n-1}$ of all trees and by the number $n$ of vertices
  yields~\eqref{eq:expectation-k-protected-vertices}.
\end{proof}

\begin{proof}[Proof of Theorem~\ref{theorem:vertex-protection-number}]
  Theorem~\ref{theorem:vertex-protection-number} follows from
  Proposition~\ref{proposition:vertex-k-protected} in the same way as
  Theorem~\ref{theorem:protection-number-tree} follows from
  Proposition~\ref{proposition:root-k-protected}.
\end{proof}

\section{Explicit formula for the number of \texorpdfstring{$\ge k$}{≥ k}-protected trees}

Our goal is to read off the coefficient of $z^n$ in
formula~\eqref{eq:R_k_formula} in explicit form.

\begin{proposition}The number of $k$-protected trees with $n$ vertices is
  \begin{equation*}
    r_{nk}=\sum_{j\ge1}(-1)^{j-1}\biggl[\binom{2n-3-(2k-1)j}{n-(k+1)j}-\binom{2n-3-(2k-1)j}{n-3-(k+1)j}\biggr].
\end{equation*}
\end{proposition}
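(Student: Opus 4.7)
The plan is to expand $R_{\ge k}(z)$ as a geometric series in $z^{k-2}R_{\ge 0}(z)^3$, use Lagrange inversion to read off the coefficients of each power of $R_{\ge 0}(z)$ as a difference of two binomials, and finally apply Pascal's rule to collapse the result into exactly the two-binomial form claimed.

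Starting from \eqref{eq:R_k_formula}, the identity $\frac{u}{1+u}=\sum_{j\ge 1}(-1)^{j-1}u^j$ applied to $u=z^{k-2}R_{\ge 0}(z)^3$ gives
\[
R_{\ge k}(z)=(1-z)\sum_{j\ge 1}(-1)^{j-1}z^{(k-2)j}R_{\ge 0}(z)^{3j}.
\]
Hence $r_{nk}=[z^n]R_{\ge k}(z)$ reduces to a signed combination of $[z^N]R_{\ge 0}(z)^{3j}$ at the two shifted values $N=n-(k-2)j$ (from the constant term of $1-z$) and $N=n-1-(k-2)j$ (from the $-z$ term).

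Since $R_{\ge 0}(z)=z/(1-R_{\ge 0}(z))$, Lagrange inversion with $\phi(y)=1/(1-y)$ yields
\[
[z^N]R_{\ge 0}(z)^m=\frac{m}{N}[y^{N-m}](1-y)^{-N}=\frac{m}{N}\binom{2N-m-1}{N-m}.
\]
The crucial repackaging is the elementary ballot-type identity
\[
\frac{m}{N}\binom{2N-m-1}{N-m}=\binom{2N-m-1}{N-m}-\binom{2N-m-1}{N-m-1},
\]
which writes each coefficient as a difference of two binomials sharing the same top argument; this is precisely the shape to which Pascal's rule applies cleanly.

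Substituting $m=3j$ together with the two values of $N$ produces, for each $j$, a four-term alternating expression whose top arguments are $2n-(2k-1)j-1$ (from $N=n-(k-2)j$) and $2n-(2k-1)j-3$ (from $N=n-1-(k-2)j$). Two applications of Pascal's rule $\binom{A}{b}=\binom{A-1}{b}+\binom{A-1}{b-1}$ lower the first pair's top from $2n-(2k-1)j-1$ down to $2n-(2k-1)j-3$, matching the second pair; a short telescoping then cancels all but two binomials, leaving exactly
\[
\binom{2n-3-(2k-1)j}{n-(k+1)j}-\binom{2n-3-(2k-1)j}{n-3-(k+1)j}.
\]
Summing over $j\ge 1$ finishes the proof. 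The only real obstacle is bookkeeping through the Pascal telescoping and checking that the four intermediate binomials cancel correctly; the analytic content is entirely concentrated in the Lagrange inversion step.
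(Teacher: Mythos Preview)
Your proof is correct, but it takes a somewhat different route from the paper's.

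Both arguments start by expanding $R_{\ge k}$ as the geometric series $(1-z)\sum_{j\ge 1}(-1)^{j-1}z^{(k-2)j}R_{\ge 0}(z)^{3j}$. From there the paper substitutes $z=u/(1+u)^2$, under which $R_{\ge 0}(z)=u/(1+u)$ and $1-z=(1-u^3)/((1-u)(1+u)^2)$, and extracts coefficients via Cauchy's integral formula. The change of variables collapses everything to $[u^{n-(k+1)j}](1-u^3)(1+u)^{2n-3-(2k-1)j}$, and the single factor $(1-u^3)$ immediately produces the two claimed binomials with the common top $2n-3-(2k-1)j$.

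You instead apply Lagrange inversion directly to $[z^N]R_{\ge 0}^{3j}$, getting the ballot-type formula $\frac{3j}{N}\binom{2N-3j-1}{N-3j}=\binom{2N-3j-1}{N-3j}-\binom{2N-3j-1}{N-3j-1}$. Combined with the factor $(1-z)$, this yields \emph{four} binomials per $j$, with tops $2n-(2k-1)j-1$ and $2n-(2k-1)j-3$, which you then reduce to the claimed pair via Pascal's rule. Carrying this out explicitly (with $A=2n-(2k-1)j-1$, $b=n-(k+1)j$) one indeed gets
\[
\binom{A}{b}-\binom{A}{b-1}-\binom{A-2}{b-1}+\binom{A-2}{b-2}=\binom{A-2}{b}-\binom{A-2}{b-3},
\]
after two applications of Pascal's rule to the first pair, so your telescoping claim checks out.

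The trade-off: the paper's substitution is slicker because the target top $2n-3-(2k-1)j$ and the factor $(1-u^3)$ appear naturally, with no post-processing needed. Your approach is more elementary in that it avoids the contour-integral change of variables and relies only on Lagrange inversion and binomial identities, at the cost of the extra Pascal bookkeeping you flagged. Both are perfectly valid; the two methods are in fact closely related, since the substitution $z=u/(1+u)^2$ is one standard way to prove Lagrange inversion for this particular $\phi$.
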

\begin{proof}We use the substitution
$z=u/(1+u)^2$, which was introduced in \cite{Bruijn-Knuth-Rice:1972} and rewrite~\eqref{eq:R_k_formula} as
\begin{align*}
G_{\ge k}\left(\frac{u}{(1+u)^2}\right)&=\frac{1-u^3}{(1-u)(1+u)^2} \frac{\frac{u^{k+1}}{(1+u)^{2k-1}}}{1+\frac{u^{k+1}}{(1+u)^{2k-1}}}\\&=\frac{1-u^3}{(1-u)(1+u)^2}\sum_{j\ge1}(-1)^{j-1}\frac{u^{(k+1)j}}{(1+u)^{(2k-1)j}}.
\end{align*}
Extracting coefficients is now done with the Cauchy integral formula:
\begin{align*}
[z^n]&\frac{1-u^3}{(1-u)(1+u)^2}\sum_{j\ge1}(-1)^{j-1}\frac{u^{(k+1)j}}{(1+u)^{(2k-1)j}}\\
&=\frac1{2\pi i}\oint \frac{dz}{z^{n+1}}\frac{1-u^3}{(1-u)(1+u)^2}\sum_{j\ge1}(-1)^{j-1}\frac{u^{(k+1)j}}{(1+u)^{(2k-1)j}}\\
&=\frac1{2\pi i}\oint \frac{du}{u^{n+1}}(1+u)^{2n-3}(1-u^3)\sum_{j\ge1}(-1)^{j-1}\frac{u^{(k+1)j}}{(1+u)^{(2k-1)j}}\\
&=\sum_{j\ge1}(-1)^{j-1}[u^{n-(k+1)j}](1-u^3)(1+u)^{2n-3-(2k-1)j}.
\end{align*}
Using the binomial theorem yields the assertion.
\end{proof}

\section{Functional equations for the constants}

Two of the constants satisfy attractive and non-trivial functional equations. This phenomenon is not uncommon in the analysis of algorithms; we point out the paper \cite{Kirschenhofer-Prodinger-Schoiss:1987:zur-auswer} where it was first observed and the survey \cite{Prodinger:2004:period-oscil} which contains many references to earlier papers. 

The first example is the constant
\begin{align*}
c_0&=\frac 92\sum_{k\ge1}\frac{2^{2k-1}}{(2^{2k-1}+1)^2}=\frac 92 F(\log2), 
\end{align*}
with
\begin{align*}
F(x)&:=\sum_{k\ge1}\frac{e^{(2k-1)x}}{(e^{(2k-1)x}+1)^2}=\sum_{k\ge1}\frac{e^{-(2k-1)x}}{(1+e^{-(2k-1)x})^2}=\sum_{k,j\ge1}(-1)^{j-1}je^{-(2k-1)jx}.
\end{align*}

\begin{proposition}\label{proposition:functional-equation}We have the functional equation
  \begin{equation*}
    F(x)=\frac1{4x}- \frac{\pi^2}{x^2}F\Bigl(\frac{\pi^2}{x}\Bigr).
  \end{equation*}
\end{proposition}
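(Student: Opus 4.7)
The plan is to apply the Mellin transform machinery together with Riemann's functional equation for $\zeta$. Starting from the representation
\begin{equation*}
  F(x) = \sum_{k,j\ge 1}(-1)^{j-1}j\, e^{-(2k-1)jx},
\end{equation*}
termwise integration against $x^{s-1}\,dx$ on $(0,\infty)$ gives, for $\Re s > 2$, the Mellin transform
\begin{equation*}
  \tilde F(s) = \Gamma(s)\,(1-2^{-s})\zeta(s)\,(1-2^{2-s})\zeta(s-1),
\end{equation*}
where $(1-2^{-s})\zeta(s)=\sum_{k\ge 1}(2k-1)^{-s}$ accounts for the sum over odd integers and $(1-2^{2-s})\zeta(s-1)=\sum_{j\ge 1}(-1)^{j-1}j^{1-s}$ for the alternating sum over $j$.

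The decisive step will be to establish the functional equation $\tilde F(s) = -\pi^{2s-2}\tilde F(2-s)$. I would substitute the relations $\zeta(1-s) = 2^{1-s}\pi^{-s}\cos(\pi s/2)\Gamma(s)\zeta(s)$ and $\zeta(2-s) = 2^{2-s}\pi^{1-s}\sin(\pi s/2)\Gamma(s-1)\zeta(s-1)$ into $\tilde F(2-s)$, collapse $\Gamma(2-s)\Gamma(s-1)\Gamma(s) = -\pi\Gamma(s)/\sin(\pi s)$ via the reflection formula, and use $2\sin(\pi s/2)\cos(\pi s/2)=\sin(\pi s)$. Everything then lines up, provided one recognises the algebraic identity
\begin{equation*}
  (1-2^{s-2})(1-2^{s})\,2^{2-2s} = (1-2^{-s})(1-2^{2-s}).
\end{equation*}
This reconciliation of the four $(1-2^{\bullet})$ factors is the main obstacle; without it the exponents of $2$ refuse to align symmetrically under $s\mapsto 2-s$.

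Mellin inversion then finishes the job. For $c\in(2,3)$, $F(x) = (2\pi i)^{-1}\int_{(c)}\tilde F(s)\,x^{-s}\,ds$. The change of variable $s\mapsto 2-s$ in the integral for $\frac{\pi^2}{x^2}F(\pi^2/x)$, combined with the functional equation of $\tilde F$, gives
\begin{equation*}
  \frac{\pi^2}{x^2}\,F\Bigl(\frac{\pi^2}{x}\Bigr) = -\frac{1}{2\pi i}\int_{(2-c)} \tilde F(s)\,x^{-s}\,ds.
\end{equation*}
Shifting the contour for $F(x)$ from $\Re s=c$ down to $\Re s=2-c$ crosses only the simple pole at $s=1$: the potential pole of $\zeta(s-1)$ at $s=2$ is killed by $(1-2^{2-s})$ and the potential pole of $\Gamma(s)$ at $s=0$ is killed by $(1-2^{-s})$. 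At $s=1$ the residue equals $\Gamma(1)\cdot\tfrac12\cdot(-1)\cdot\zeta(0)\cdot x^{-1} = \frac{1}{4x}$. Combining the two displays gives the claimed identity $F(x) = \frac{1}{4x} - \frac{\pi^2}{x^2}F(\pi^2/x)$. The contour shift itself is routine, justified by the standard polynomial bounds on $|\Gamma(s)\zeta(s)\zeta(s-1)|$ in vertical strips.
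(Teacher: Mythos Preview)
Your proof is correct and follows essentially the same route as the paper: compute the Mellin transform $\tilde F(s)=\Gamma(s)\zeta(s)\zeta(s-1)(1-2^{-s})(1-2^{2-s})$, exploit the symmetry of this transform under $s\mapsto 2-s$ (your identity $(1-2^{s-2})(1-2^{s})2^{2-2s}=(1-2^{-s})(1-2^{2-s})$ is exactly the paper's $h(2-s)=2^{2s-2}h(s)$), and shift the contour across the lone pole at $s=1$. The only cosmetic difference is that the paper uses the symmetric form of the functional equation for $\zeta$ together with the Legendre duplication formula for $\Gamma$, whereas you use the asymmetric form with $\cos(\pi s/2)$, $\sin(\pi s/2)$ and the reflection formula; the two calculations are equivalent.
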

Since $\frac{\pi^2}{\log^22}F\bigl(\frac{\pi^2}{\log2}\bigr)=0.0000134525077\dots$, we have the \emph{near-identity} $F(\log2)\approx \frac1{4\log2}$.

\begin{proof}
We compute the Mellin transform of it \cite{Flajolet-Gourdon-Dumas:1995:mellin}, which exists (at least) in the fundamental strip
$\langle 2,\infty\rangle$:
\begin{align*}
F^*(s)&=\Gamma(s)\sum_{k,j\ge1}(-1)^{j-1}j(2k-1)^{-s}j^{-s}=\Gamma(s)\zeta(s-1)\zeta(s)(1-2^{2-s})(1-2^{-s}).
\end{align*}
The inversion formula for the Mellin transform gives the original function back
(integration is along vertical lines). We shift the line of integration to the
left and collect residues:
\begin{align*}
F(x)&=\frac1{2\pi i}\int\limits_{(\frac52)}\Gamma(s)\zeta(s-1)\zeta(s)(1-2^{2-s})(1-2^{-s})x^{-s}ds\\
&=\frac1{4x}+\frac1{2\pi
  i}\int\limits_{(-\frac52)}\Gamma(s)\zeta(s-1)\zeta(s)h(s)x^{-s}ds
\end{align*}
for
\begin{equation*}
  h(s) = (1-2^{2-s})(1-2^{-s}).
\end{equation*}
In the remainder of this proof, the relation
\begin{equation}\label{eq:h-property}
  h(2-s)=2^{2s-2}h(s)
\end{equation}
will be the only property of $h(s)$ that we will use.

We now use the duplication formula for the Gamma function and the
functional equation for the Riemann zeta function, a substitution $s=2-u$ and
then again a shift of the line of integration.
\begin{multline*}
\frac1{2\pi
  i}\int\limits_{(-\frac52)}\frac{2^{s-1}}{\sqrt{\pi}}\Gamma\Bigl(\frac{s}{2}\Bigr)\Gamma\Bigl(\frac{s+1}{2}\Bigr)\zeta(s-1)\zeta(s)h(s)x^{-s}ds\\
\begin{aligned}
&=\frac1{2\pi i}\int\limits_{(-\frac52)}\frac{2^{s-1}}{\sqrt{\pi}}\frac{s-1}{2}h(s)
\pi^{2s-2}\Gamma\Bigl(\frac{1-s}{2}\Bigr)\zeta(1-s)\Gamma\Bigl(\frac{2-s}{2}\Bigr)\zeta(2-s)x^{-s}ds\\
&=\frac1{2\pi i}\int\limits_{(\frac92)}\frac{2^{1-u}}{\sqrt{\pi}}\frac{1-u}{2}h(2-u)
\pi^{2-2u}\Gamma\Bigl(\frac{u-1}{2}\Bigr)\zeta(u-1)\Gamma\Bigl(\frac{u}{2}\Bigr)\zeta(u)x^{u-2}du\\
&=-\frac1{2\pi i}\int\limits_{(\frac52)}2^{u-1}h(u)
\pi^{2-2u} \frac{1}{2^{u-1}}  \Gamma(u)\zeta(u-1)\zeta(u)x^{u-2}du\\
&=-\frac{\pi^2}{x^2}\frac1{2\pi i}\int\limits_{(\frac52)}h(u)
\pi^{-2u}  \Gamma(u)\zeta(u-1)\zeta(u)x^{u}du\\
&=-\frac{\pi^2}{x^2}F\Bigl(\frac{\pi^2}{x}\Bigr).
\end{aligned}
\end{multline*}
\end{proof}

Our second example relates to a sum that appears within the constant $d_2$:
\begin{equation*}
S=\frac32\sum_{k\ge1}\frac{2k-1}{2^{2k-1}+1}=\frac32\sum_{k\ge1}\frac{(2k-1)2^{-2k+1}}
{1+2^{-2k+1}}=\frac32\sum_{k,j\ge1}(-1)^{j-1}(2k-1)2^{-j(2k-1)}=
\frac32G(\log 2)
\end{equation*}
with
\begin{equation*}
G(x)=\sum_{k,j\ge1}(-1)^{j-1}(2k-1)e^{-j(2k-1)x}.
\end{equation*}

\begin{proposition}
   We have the functional equation
\begin{align*}
G(x)=\frac{\pi^2}{24x^2}+\frac{1}{24}-\frac{\pi^2}{x^2}G\Bigl(\frac{\pi^2}{x}\Bigr).
\end{align*}
\end{proposition}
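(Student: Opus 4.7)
The plan is to mimic the Mellin-transform proof of Proposition~\ref{proposition:functional-equation} as closely as possible. First I would compute
\begin{equation*}
G^*(s)=\Gamma(s)\zeta(s-1)\zeta(s)h(s),\qquad h(s)\coloneqq(1-2^{1-s})^2,
\end{equation*}
with fundamental strip $\langle 2,\infty\rangle$ (the decay $G(x)=O(x^{-2})$ as $x\to 0^+$ matches the rightmost pole at $s=2$). This factorisation is obtained by splitting the double sum as $\sum_{k\ge1}(2k-1)^{1-s}\cdot\sum_{j\ge1}(-1)^{j-1}j^{-s}$ and evaluating these one-dimensional sums to $(1-2^{1-s})\zeta(s-1)$ and $(1-2^{1-s})\zeta(s)$ respectively.

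Next I would apply Mellin inversion on a line to the right of $s=2$ and shift the contour leftward, picking up the residues of $G^*(s)x^{-s}$. Three spots require care. At $s=2$ the pole of $\zeta(s-1)$ contributes $\Gamma(2)\zeta(2)h(2)x^{-2}=\frac{\pi^2}{24}x^{-2}$. At $s=1$ the simple pole of $\zeta(s)$ is killed (with a simple zero to spare) by the double zero of $h(s)$, so no residue appears. At $s=0$ the pole of $\Gamma$ contributes $\zeta(-1)\zeta(0)h(0)=(-\tfrac{1}{12})(-\tfrac{1}{2})(1)=\tfrac{1}{24}$. For $s=-1,-2,\ldots$ the trivial zeros of $\zeta(s-1)$ at odd negative $s$ and of $\zeta(s)$ at even negative $s$ together annihilate every remaining pole of $\Gamma$, so the contour may be moved arbitrarily far left without further contributions.

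The crux of the argument is then the observation
\begin{equation*}
h(2-s)=(1-2^{s-1})^2=2^{2s-2}h(s),
\end{equation*}
which is exactly the property \eqref{eq:h-property} that drives the proof of Proposition~\ref{proposition:functional-equation}. Since this was the only feature of $h$ used there, the identical chain of manipulations — duplication formula for $\Gamma$, functional equation for $\zeta$, substitution $s=2-u$, and shifting the line back to $\Re(u)=5/2$ — will turn the remaining integral into $-\frac{\pi^2}{x^2}G\bigl(\frac{\pi^2}{x}\bigr)$. Adding the two residues to this integral produces the claimed functional equation.

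The main obstacle is the residue bookkeeping at $s=0$ and $s=1$, where several poles and zeros conspire to leave only the harmless constant $\tfrac{1}{24}$ and the absence of an $x^{-1}$-term; once this is settled, the remainder of the proof is essentially a quotation of the argument for Proposition~\ref{proposition:functional-equation}.
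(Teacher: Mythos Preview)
Your proposal is correct and is essentially the paper's own argument: compute the Mellin transform $\Gamma(s)\zeta(s-1)\zeta(s)(1-2^{1-s})^2$, verify that this $h(s)$ satisfies \eqref{eq:h-property}, and then quote the proof of Proposition~\ref{proposition:functional-equation} verbatim. The paper does not even spell out the residue bookkeeping at $s=2$, $s=1$, $s=0$ that you carry out (correctly), so your write-up is in fact more detailed than the published proof; the only superfluous remark is the aside about moving the contour ``arbitrarily far left,'' since in practice you stop at $\Re(s)=-5/2$ before performing the substitution $s=2-u$.
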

Since
$\frac{\pi^2}{\log^22}G\bigl(\frac{\pi^2}{\log2}\bigr)=0.0000134525165276\dots$, we have the near-identity
$G(\log2)\approx \frac{\pi^2}{24\log^22}+\frac{1}{24}$.

\begin{proof}
  The Mellin transform of $G$ is
\begin{align*}
\Gamma(s)\sum_{k,j\ge1}(-1)^{j-1}j^{-s}(2k-1)^{-s+1}
=\Gamma(s)\zeta(s)\zeta(s-1)(1-2^{1-s})^2.
\end{align*}
The proof of Proposition~\ref{proposition:functional-equation} applies with $h(s)$ replaced by $(1-2^{1-s})^2$ which again has
the property~\eqref{eq:h-property}.
\end{proof}

\bibliography{bib/cheub}
\bibliographystyle{amsplainurl}

\end{document}
